\newtheorem{theorem}[subsection]{Theorem}
\newtheorem{lemma}[subsection]{Lemma}
\theoremstyle{definition}
\newtheorem{definition}[subsection]{Definition}
\theoremstyle{remark}
\newtheorem{remark}[subsection]{Remark}
\newtheorem{example}[subsection]{Example}
\makeatletter \@addtoreset{subsection}{section}
\newcommand{\rr}{\mathbb{R}}
\newcommand{\nn}{\mathbb{N}}
\newcommand{\zz}{\mathbb{Z}}
\newcommand\id{\mathrm{id}}
\newcommand{\Hplus}{H^{+}(F)}
\renewcommand{\Im}{\mathop{\mathrm{Im}}\nolimits}
\newcommand\ntN{\mathbb{N}}
\newcommand\ntNInv{-\mathbb{N}}
\newcommand\ntZ{\mathbb{Z}}
\newcommand\typeGen[1]{A_{#1}}
\newcommand\typeFinite[1]{\typeGen{[#1]}}
\newcommand\typeNatural{\typeGen{\ntN}}
\newcommand\typeNaturalInv{\typeGen{\ntNInv}}
\newcommand\typeIntegers{\typeGen{\ntZ}}
\newcommand\iind{i}
\newcommand\Iind{\Delta}
\newcommand\diam{\mathrm{diam}\,}
\newcommand\classHomeotopy{\EuScript{P}}
\newcommand\HFdSi[1]{H^{+}(F_{#1}, \partial_{-}S_{#1})}
\newcommand\HF{H(F)}
\newcommand\Xsp{X}
\newcommand\HFX{H^{+}(F_{\Xsp},\partial_{-}\Xsp)}
\begin{document}

\title[Homeotopy groups of rooted tree like nonsingular foliations]
         {Homeotopy groups of rooted tree like non-singular foliations on
         the plane}

\author{Yu. Yu. Soroka}
\address{Taras Shevchenko National University of Kyiv, Kyiv, Ukraine}
\email{sorokayulya15@gmail.com}

\subjclass[2000]{Primary 57S05; Secondary 57R30, 55Q05 }
\date{31/03/2016}
\keywords{Non-singular foliations, homeotopy groups.}

\begin{abstract}
  Let $F$ be a non-singular foliation on the plane with all leaves
  being closed subsets, $H^{+}(F)$ be the group of homeomorphisms of
  the plane which maps leaves onto leaves endowed with compact open
  topology, and $H^{+}_{0}(F)$ be the identity path component of
  $H^{+}(F)$. The quotient $\pi_0 H^{+}(F) = H^{+}(F)/H^{+}_{0}(F)$ is
  an analogue of a mapping class group for foliated homeomorphisms. We
  will describe the algebraic structure of $\pi_0 H^{+}(F)$ under
  an assumption that the corresponding space of leaves of $F$ has a
  structure similar to a rooted tree of finite diameter.
\end{abstract}

\maketitle

\section{Introduction}
Non-singular foliations on the plane were studied by
W.~Kaplan~\cite{Kaplan:DJM:1940,Kaplan:DJM:1941} and
H.~Whitney~\cite{Whitney:AM:1933} in the 40--50 years of the XX
century.  In particular, W.~Kaplan in~\cite{Kaplan:DJM:1941}
has generalized a theorem of E.~Kamke and proved that every non-singular
foliation $F$ on the plane admits a continuous function $f: \rr^{2}
\rightarrow \rr$ such that
\begin{enumerate}
\item [1)] the leaves of $f$ are connected components of level sets $f^{-1}(c)$, $c\in\rr$;
\item [2)] near each $z\in\rr^{2}$ there are local coordinates $(u,v)$ in which $f(u,v) = u + f(z)$.
\end{enumerate}
This result was further extended to foliations with singularities by W.~Boothby~\cite{Boothby:AJM_1:1951}, and J.~Jenkins and M.\,Morse~\cite{JenkinsMorse:AJM:1952}.
Topological classification of different kinds of functions on surfaces was investigated in many papers,
see e.g.\! A.~Fomenko and A.~Bolsinov~\cite{BolsinovFomenko:1997}, A.~Oshem\-kov~\cite{Oshemkov:PSIM:1995},
V.~Sharko~\cite{Sharko:UMZ:2003}, \cite{Sharko:Zb:2006}, O.~Prishlyak~\cite{Prishlyak:UMZ:2000},
\cite{Prishlyak:MFAT:2002}, E.~Polulyakh and I.~Yurchuk~\cite{PolulyakhYurchuk:ProcIM:2009},
E.~Polulyakh~\cite{Polulyakh:UMZ:ENG:2015}, V.~Sharko and Yu.~Soroka~\cite{SharkoSoroka:MFAT:2015}.

W.\,Kaplan in~\cite{Kaplan:DJM:1940,Kaplan:DJM:1941} has also
mentioned that a non-singular foliation on the plane is glued of
countably many strips along open boundary intervals and such that each
strip has a foliation by parallel lines.  In a recent paper
S.~Maksymenko and E.~Polulyah~\cite{MaksymenkoPolulyakh:PGC:2015}
studied non-singular foliations $F$ on arbitrary non-compact surfaces
$\Sigma$ glued from strips in a similar way.  They proved
contractibility of the connected components of groups $\HF$ of
homeomorphisms of $\Sigma$ mapping leaves onto leaves.  Thus the
homotopy type of $\HF$ is determined by the quotient group $\pi_0 \HF
= \HF/H_0(F)$ of path components of $\HF$, where $H_0(F)$ is the
identity path component of $\HF$.

In the present paper we compute the groups $\pi_0 \HF$ for a special
class of non-singular foliations on the plane whose spaces of leaves
have the structure similar to rooted trees of finite diameter, see
Theorem~\ref{th:main_theorem}.

\section{Striped surfaces}

Let $\Sigma_i$ be a surface with a foliation $F_i$, $i=1,2$.
Then a homeomorphism $h:\Sigma_1 \to \Sigma_2$ will be called
\emph{foliated} if it maps leaves of $F_1$
onto leaves of $F_2$.

\begin{definition}\label{df:model_strip} A subset $S \subset \rr^2$
    will be called a \emph{model strip} if the following two
    conditions hold:
\begin{enumerate}
\item[$1)$] $\rr \times (-1, 1) \ \subseteq \ S \ \subset \ \rr \times [-1,1]$;
\item[$2)$] $S \ \cap \ \rr \times \lbrace -1, 1 \rbrace$ is a union of open mutually disjoint finite intervals.
\end{enumerate}
\end{definition}

\medskip

Put
$$
\partial_{-} S = S \cap (\rr \times \lbrace -1 \rbrace), \quad
\partial_{+} S = S \cap (\rr \times \lbrace 1 \rbrace), \quad
\partial S = \partial_{-} S \cup \partial_{+} S.
$$

Notice that every model strip has an oriented foliation consisting of horizontal arcs  $\rr \times t$, $t \in (-1,1)$, and connected components of $\partial S$.

Let $\lbrace S_{\lambda} \rbrace _{\lambda \in \Lambda}$ be an arbitrary family of model strips, and
$$
X = \underset {\lambda \in \Lambda} {\cup} \partial_{-}
S_{\lambda},\quad
Y = \underset {\lambda \in \Lambda} {\cup} \partial_{+} S_{\lambda}.
$$

By Definition~\ref{df:model_strip}, $X$ and $Y$ are disjoint unions of open intervals, therefore one can also
write
$$
X=\underset {\alpha \in A} {\cup} X_{\alpha}, \quad
Y=\underset {\beta \in B} {\cup} Y_{\beta},
$$
where $X_{\alpha}$ and $Y_{\beta}$ are open boundary intervals of those models strips and $A$ and $B$ are some index sets.

We will now glue model strips $S_{\lambda}$ by identifying some of the intervals of $X_{\alpha}$ with some of the intervals of $Y_{\beta}$.
In order to make this let us fix any set of indexes $C$ and two injective maps $p: C \rightarrow A$ and $q: C \rightarrow B$.
Notice that for each $\gamma \in C$ there exists a unique preserving orientation affine homeomorphism $\varphi_{\gamma}: X_{p(\gamma)} \rightarrow Y_{q(\gamma)}$.
Then the quotient space
\begin{equation}\label{equ:striped_surface}
\Sigma: = \bigsqcup \limits_{\lambda\in \Lambda} S_{\lambda} \Bigr/ \{X_{p(\gamma)} \stackrel{\varphi_{\gamma}}{\sim} Y_{q(\gamma)} \}
\end{equation}
will be called a \emph{striped surface}.

\begin{remark}\rm
A unique preserving orientation affine homeomorphism $\phi:(a,b)\to(c,d)$ is given by $\phi(t) = \frac{c-d}{b-a}(t-a)$.
\end{remark}

\begin{remark}\label{rem:wider_class}\rm
  In \cite{MaksymenkoPolulyakh:PGC:2015} a wider class of striped
  surfaces is considered: it is also allowed to identify arbitrary
  connected components of $\bigsqcup \limits_{\lambda\in
    \Lambda}\partial S_{\lambda}$ and the gluing affine homeomorphisms
  may reverse orientation.
\end{remark}

Let also $p: \bigsqcup \limits_{\lambda\in \Lambda} S_{\lambda} \rightarrow \Sigma$ be the quotient map and $p_{\lambda}: S_{\lambda} \rightarrow \Sigma$ be the restriction of $p$ to the model strip $S_{\lambda}$.
Then the pair $(S_{\lambda}, p_{\lambda})$ will be called a \emph{chart} for the strip $S_{\lambda}$.

Since the homeomorphism $\varphi_{\gamma}$ identifies leaves of such foliations, we see that every striped
surface has the foliation $F$ consisting of foliations on model strips.
This foliation will be called \emph{canonical}.

Moreover, each leaf of the foliation on the model strip is oriented and the gluing preserves orientation.
Therefore all leaves of $F$ are oriented.

\medskip

{\bf Special leaves.}
Let $U\subset\Sigma$ be a subset.
Then the union of all leaves of $F$ intersecting $U$ is called the \emph{saturation} of $U$ with respect to $F$
and denoted by $Sat(U)$.

A leaf $\omega$ of $F$ will be called \emph{special} if
\[
\omega \neq \underset{N(\omega)} {\bigcap} \overline{Sat \left( N(\omega) \right)},
\]
where $N(\omega)$ runs over all open neighborhoods of $\omega$.

For instance each leaf $\omega$ belonging to the interior of a strip is non-special.
Moreover, suppose $\omega = X_{p(\gamma)} \sim Y_{q(\gamma)}$ is a leaf such that $\partial_{-} S_{\lambda} = X_{p(\gamma)}$ and $\partial_{+} S_{\lambda'} = Y_{p(\gamma)}$, see Figure~\ref{fig:nonspec_leaf}(a).
Then the topological structure of the foliation $F$ near $\omega$ is ``similar'' to the structure of $F$ near ``internal'' leaves of strips and such a leaf is non-special as well, see~\cite[Lemma~3.2]{MaksymenkoPolulyakh:PGC:2015}.

It also follows from that lemma that $\omega$ is special if and only if one of the following two conditions hold, see Figure~\ref{fig:nonspec_leaf}(b):
\begin{enumerate}
\item [1)] $\omega$ is the image of gluing of leaves $X_{p(\gamma)}$ and $Y_{q(\gamma)}$ such that either $X_{p(\gamma)} \subsetneq \partial_{-} S_{\lambda}$ or $ Y_{p(\gamma)} \subsetneq  \partial_{+} S_{\lambda'}$ for some $\gamma \in C$, $\lambda,\lambda'\in\Lambda$;
\item [2)] $\omega \subsetneq \partial_{-} S_{\lambda}$ or $\omega \subsetneq \partial_{+} S_{\lambda}$ for some $\lambda\in\Lambda$.
\end{enumerate}

\begin{figure}[h]
\begin{tabular}{ccc}
\includegraphics[height = 1.9 cm]{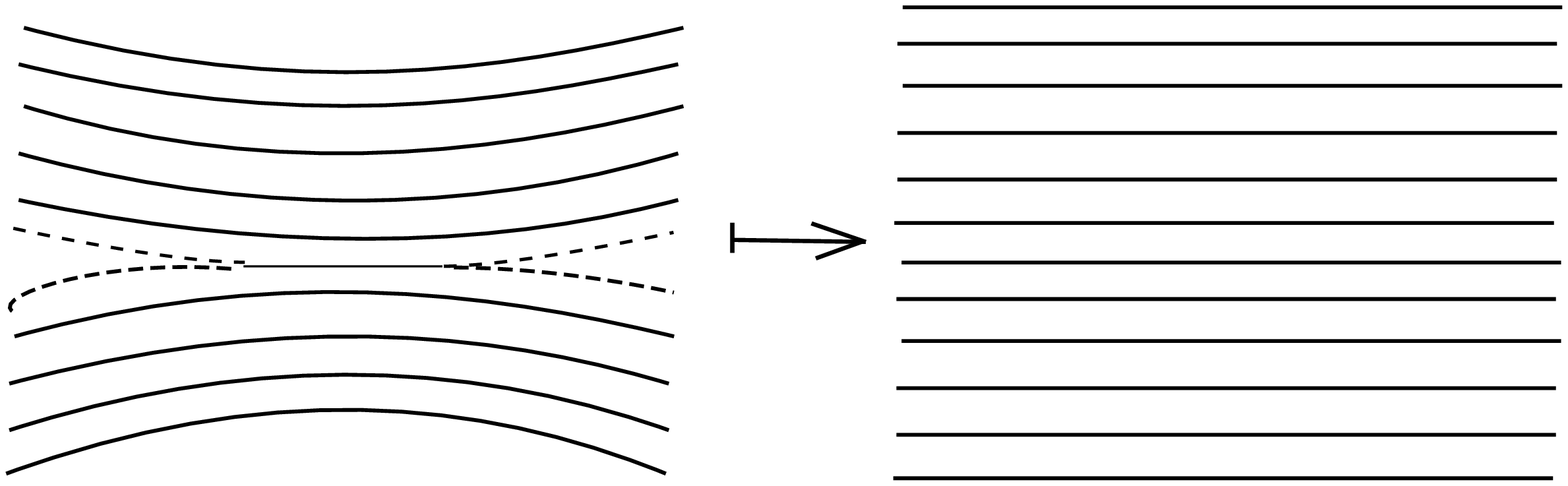} & \qquad\qquad &
\includegraphics[height = 1.9 cm]{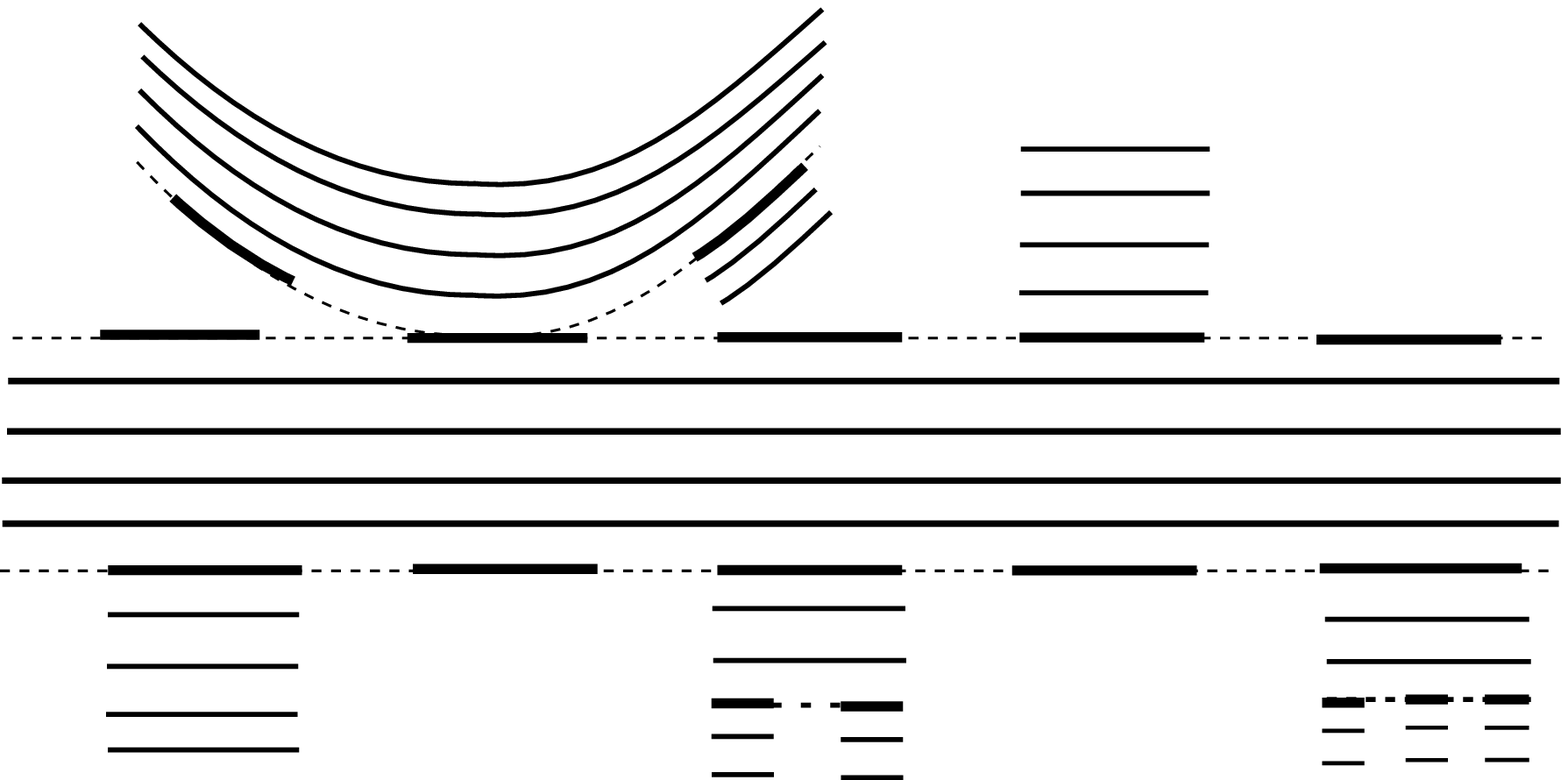} \\
(a) Non-special leaf & & (b) Special leaves
\end{tabular}
\caption{}
\label{fig:nonspec_leaf}
\end{figure}

\medskip

{\bf Reduced striped surfaces.}
A striped surface $\Sigma$ will be called \emph{reduced} whenever a leaf $\omega$ is special if and only if
one of the following conditions holds:
\begin{enumerate}
\item [1)]$\omega$ is an image of gluing of some leaves $X_{p(\gamma)}\sim Y_{q(\gamma)}$ for some $\gamma\in C$;
\item [2)] $\omega \subsetneq \partial_{-} S_{\lambda}$ or $\omega \subsetneq \partial_{+} S_{\lambda}$ for some $\lambda\in\Lambda$.
\end{enumerate}

Let $S$ be a model strip such that $\partial_{-}S = (0,1) \times {-1}$ and $\partial_{+}S = (0,1) \times {1}$.
Let also $\phi:\partial_{-}S \to \partial_{+}S$ be a homeomorphism defined by $\phi(t,-1) = (t,1)$, $t\in(0,1)$, and
$\mathcal{C} = S / \phi$ be the quotient space obtained by identifying each $x\in\partial_{-}S$ with $\phi(x) \in \partial_{+}S$.

Then $\mathcal{C}$ is a striped surface homeomorphic with a cylinder, and its canonical foliation has no special leaves.

It follows from~\cite[Theorem~3.7]{MaksymenkoPolulyakh:PGC:2015} that every striped surface (in the sense of~\eqref{equ:striped_surface}, see Remark~\ref{rem:wider_class}) is foliated homeomorphic either to $\mathcal{C}$ or to a reduced surface.

\medskip

{\bf Graph of a striped surface.}
For a reduced striped surface $\Sigma$ not foliated homeo\-morphic with $\mathcal{C}$ define an oriented graph $\Gamma(\Sigma)$ whose vertexes are strips and whose edges are special leaves.
More precisely: if $\omega = X_{p(\gamma)} \sim Y_{q(\gamma)}$ is a special leaf of $F$, $X_{p(\gamma)} \subset
\partial_{-} S_{\lambda_0}$, and $Y_{q(\gamma)} \subset \partial_{+} S_{\lambda_1}$, then we assume that $\omega$
is an \emph{edge} between \emph{vertices} $S_{\lambda_0}$ and $S_{\lambda_1}$ oriented from $S_{\lambda_1}$ to $S_{\lambda_0}$.

If $\lambda_0 = \lambda_1$, then $\omega$ correspond to a loop in $\Gamma(\Sigma)$ at $S_{\lambda_0}=S_{\lambda_1}$ being a vertex of $\Gamma(\Sigma)$.

Since a model strip may have infinitely many boundary components, we see that a graph $\Gamma(\Sigma)$ can be not locally finite.
On the other hand, it can have a finite diameter $\diam\Gamma(\Sigma)$, being the minimal non-negative integer $d$ such that every two vertices $v_1$ and $v_2$ are connected in $\Gamma(\Sigma)$ by a path consisting at most $d$ edges.

\medskip

{\bf Admissible striped surfaces.}
Recall that a family $\mathcal{V} = \{V_i\}_{i\in\Lambda}$ of subsets in a topological space $X$ is called \emph{locally finite} whenever for each $x\in X$ there exists an open neighborhood intersecting only finitely many elements from $\mathcal{V}$.

It is well known and is easy to see that \emph{a union of a locally finite family of closed subsets is closed},
e.g.~\cite[Chapter 1, \S~5.VIII]{Kuratowski:Top1:1966}.
\begin{definition}
{\rm{A model strip $S$ will be called \emph{admissible} if the closures of intervals in $\partial_{-} S$
and $\partial_{+} S$ are mutually disjoint and constitute a locally finite family in
$\rr^{2}$.}}
\end{definition}

\begin{example}\rm
A model strip with
\[
 \partial_{+} S = \bigcup\limits_{n \in \zz\setminus\{-1,0\}} \bigl(\tfrac{1}{n+1},\tfrac{1}{n}\bigr) \times {1}
\]
is not admissible, since condition 2) of Definition~\ref{df:model_strip} fails.
\end{example}

It will be convenient to use the following notation:
$$
[0] = \varnothing, \quad
[n] = \{1,2,\ldots, n\}, \quad
-\nn = \{-1,-2,\ldots\}.
$$

Let also $J_{\iind} = (\iind, \iind+0.5)$, $\iind\in\zz$, and for a subset $\Iind \subset \zz$
denote
\[
\typeGen{\Iind} =  \bigcup\limits_{\iind\in\Iind} J_{\iind}.
\]

In particular, consider the following collections of mutually disjoint open
intervals:
\begin{align*}
\typeFinite{n}=  & \bigcup\limits_{\iind=1}^{n} (\iind, \iind+0.5), \quad n=0,1,\ldots, &
\typeNatural=    &  \bigcup\limits_{\iind \in \nn} (\iind, \iind+0.5),  \\ 
\typeNaturalInv= & \bigcup\limits_{-\iind\in\nn} (\iind, \iind+0.5), & 
\typeIntegers=   &  \bigcup\limits_{\iind\in\zz} (\iind,
\iind+0.5),
\end{align*}
which will be called \emph{standard}.
The following easy lemma is left for the reader.

\begin{lemma}
Let $S$ be an admissible model strip.
Then there exists a homeomorphism $h:\rr^2\to\rr^2$ preserving each line $\rr\times t$, $t\in(-1,1)$, with its orientation, and such that $h(S)$ is a model strip with $\partial_{-}h(S) = \typeGen{\alpha} \times\{-1\}$ and $\partial_{+}h(S)=\typeGen{\beta} \times\{1\}$, where $\typeGen{\alpha}$ and $\typeGen{\beta}$ are standard collections of intervals, i.e. $\alpha, \beta\in\{[0], [1], \ldots, \ntN, \ntNInv, \ntZ\}$, see Figure~\ref{fig:a_types}.
Moreover, $\alpha$ and $\beta$ do not depend on a particular choice of such $h$.
\end{lemma}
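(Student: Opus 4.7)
My plan proceeds by first classifying the order type of the boundary intervals of $S$, then realizing the standard form on each boundary line separately, next extending the result to the plane by an interpolation in the $t$-coordinate, and finally deducing uniqueness from order invariance.

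The key combinatorial input is the following. By admissibility, the closures of the intervals of $\partial_- S$ form a pairwise disjoint locally finite family in $\rr^2$, which is equivalent to saying that every compact subset of $\rr$ meets only finitely many of them. From this I would derive three properties: the family is at most countable; between any two of its members (in the order inherited from $\rr$) there are only finitely many others; and it cannot accumulate at any finite point. A countable totally ordered set with these properties has order type in $\{[0], [1], \ldots, \nn, -\nn, \zz\}$, because any countable linear order containing a limit element would force, under an order-preserving embedding into $\rr$, an accumulation at some finite point, contradicting local finiteness. The same analysis yields the corresponding $\beta$ for $\partial_+ S$.

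Having indexed the intervals of $\partial_- S$ in the natural order as $\{K_i\}_{i\in\alpha}$, I construct an orientation-preserving homeomorphism $h_-: \rr \to \rr$ sending each $K_i$ affinely onto $J_i$ and each complementary gap affinely (or via an increasing homeomorphism on any unbounded tail) onto the corresponding gap of $\typeGen{\alpha}$. An analogous $h_+$ is built from $\partial_+ S$. To pass to $\rr^2$, set
$$
h(x,t) = \Bigl(\tfrac{1-s(t)}{2}\,h_-(x) + \tfrac{1+s(t)}{2}\,h_+(x),\ t\Bigr),
$$
where $s:\rr\to[-1,1]$ is continuous and non-decreasing with $s(t)=t$ on $[-1,1]$. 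For each fixed $t$ the first coordinate is a convex combination of two increasing homeomorphisms of $\rr$, hence itself an increasing homeomorphism; therefore $h$ preserves each horizontal line with its orientation, and continuity and invertibility of $h$ in $(x,t)$ are routine. By construction $h(S)$ has the standard boundaries $\typeGen{\alpha}\times\{-1\}$ and $\typeGen{\beta}\times\{1\}$.

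For uniqueness, if $h'$ is another such homeomorphism realizing $\alpha', \beta'$, then $g := h' \circ h^{-1}$ is an orientation-preserving line-preserving self-homeomorphism of $\rr^2$ whose restriction to $\rr\times\{-1\}$ is an increasing self-homeomorphism of $\rr$ carrying $\typeGen{\alpha}$ onto $\typeGen{\alpha'}$. This restriction induces an order-preserving bijection between the index sets, so $\alpha$ and $\alpha'$ have the same order type; as the five canonical values are pairwise non-isomorphic, $\alpha = \alpha'$, and symmetrically $\beta = \beta'$. The main obstacle is the order-type classification at the start — this is where the local finiteness part of admissibility does essential work, ruling out the pathological countable linear orders (e.g.~$\omega+1$ or $\omega+\omega$) that would otherwise be possible; everything afterwards is mechanical.
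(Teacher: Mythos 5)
The paper gives no proof of this lemma (it is explicitly ``left for the reader''), so there is nothing to compare against; your argument is correct and complete. The three steps --- classifying the order type of the boundary intervals via local finiteness (every compact set meets finitely many closures, hence finitely many intervals lie between any two, forcing one of the types $[n]$, $\nn$, $-\nn$, $\zz$), extending $h_{-}$ and $h_{+}$ to the plane by the fiberwise convex combination, and deducing uniqueness of $\alpha,\beta$ from the fact that an orientation-preserving line-preserving homeomorphism induces an order isomorphism between the families of boundary intervals and the five standard types are pairwise non-isomorphic as ordered sets --- are all sound.
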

\begin{figure}[h]
\begin{tabular}{ccc}
\includegraphics[height = 1.2cm]{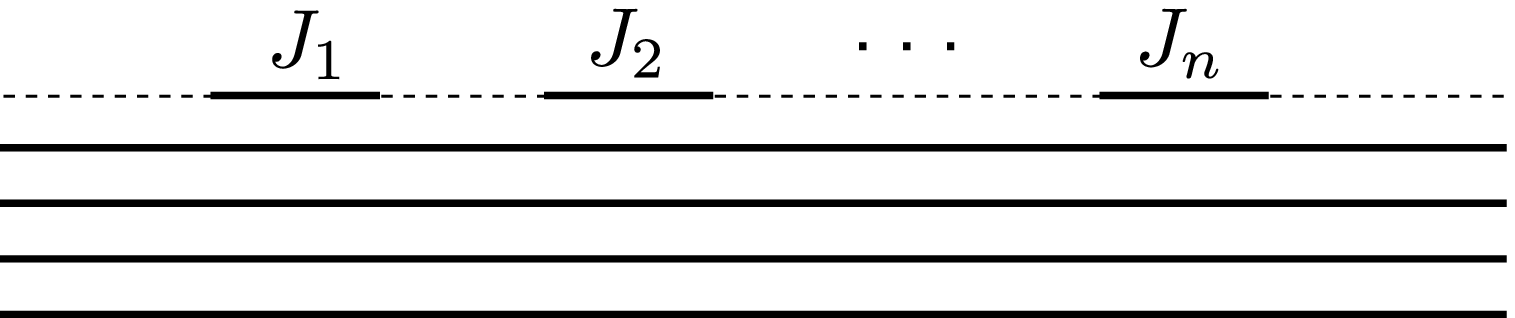} & \qquad &
\includegraphics[height = 1.2cm]{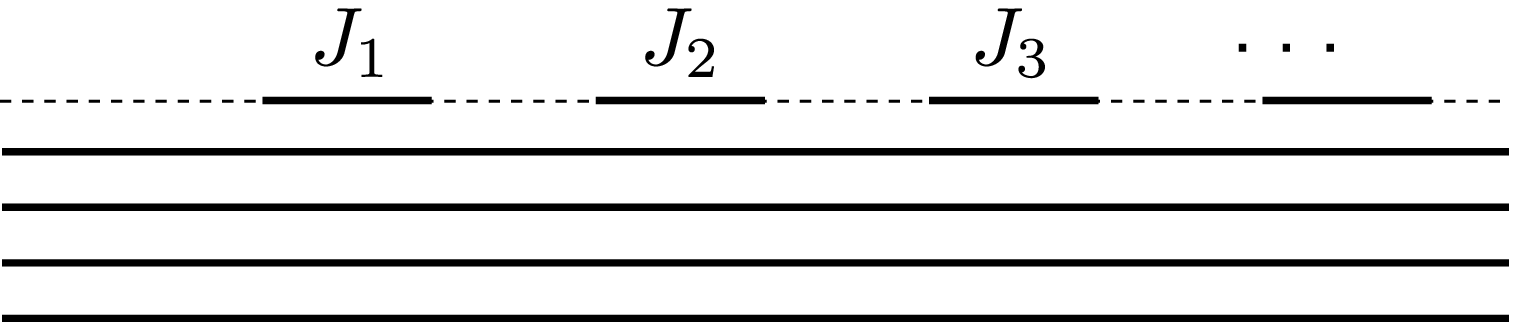} \\
$\typeFinite{n}$ & & $\typeNatural$ \\ \\
\includegraphics[height = 1.2cm]{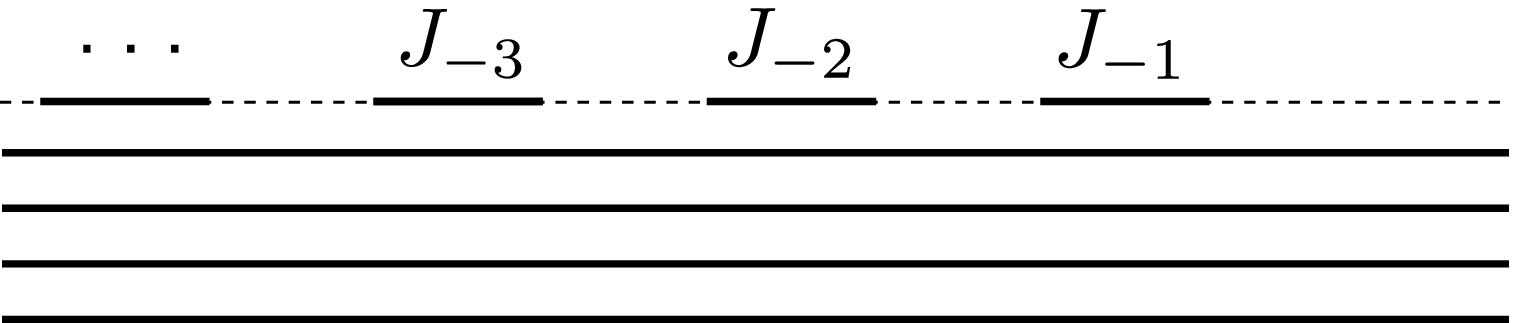} & &
\includegraphics[height = 1.2cm]{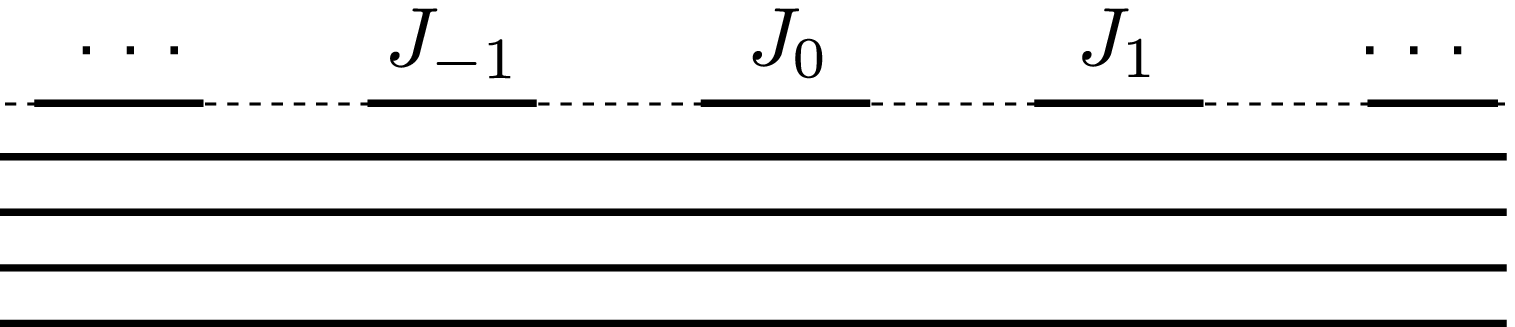} \\
$\typeNaturalInv$ & & $\typeIntegers$
\end{tabular}
\caption{Types of $\partial_{+}S$}
\label{fig:a_types}
\end{figure}

Thus for an admissible model strip $S$ its foliated topological type is determined by the ordinal type of collections of boundary intervals in $\partial_{-}S$ and $\partial_{+}S$.

\section{Wreath products}
Let $H$ and $S$ be two groups.
Denote by $Map(H,S)$ the group of all \emph{maps} (not necessarily homomorphisms) $\varphi: H \rightarrow S$ with respect to the point-wise multiplication.
Then the group $H$ acts on $Map(H,S)$ by the following rule: the result of the action of $\varphi\in Map(H,S)$ on $h\in H$ is the composition map:
\[
\varphi \circ h :  H \longrightarrow H \longrightarrow S.
\]
The semidirect product $Map(H,S) \rtimes H$ corresponding to this action will be denoted by $S \wr H$ and called the \emph{wreath product} of $S$ and $H$.
Thus
\[
S \wr H= Map(H,S) \rtimes H
\]
is the Cartesian product $Map(H,S) \times H$ with the multiplication given by the formula
\[
 (\varphi_1, h_1) \cdot (\varphi_2, h_2)
= \bigl((\varphi_1 \circ h_2)\cdot \varphi_2, h_1 \cdot h_2 \bigr)
\]
for $(\varphi_1, h_1), (\varphi_2, h_2) \in Map(H,S) \rtimes H$.

Let  $\varepsilon: H \rightarrow S$ be the constant map into the unit of $S$.
Then the pair $(\varepsilon, \id_H)$ is the unit element of $S \wr H$.
Moreover, if $(\varphi,h) \in S \wr H$ and $\varphi^{-1} \in Map(H,S)$ is the point-wise inverse of $\varphi$, then $(\varphi^{-1} \circ h^{-1}, h^{-1})$ is the inverse of $(\varphi,h)$ in $S \wr H$.

We also have the following exact sequence:
\[
1 \to Map(H,S) \xrightarrow{~~i~~} S \wr H \xrightarrow{~~\pi~~} H \to 1,
\]
where $i(\varphi)=(\varphi,e)$, $e$ is the unit of $H$,  and $\pi(\varphi,h)=h$.
Moreover, $\pi$ admits a section $s: H \rightarrow S \wr H$ defined by $s(h)=(\varepsilon, h)$.

\section{Main result}
{\bf Homeotopy group of a canonical foliation.}
Let $\Sigma$ be striped surface with a canonical foliation $F$.
Denote by $\HF$ the groups of all foliated homeomorphisms $h:\Sigma\to\Sigma$, i.e.\! homeomorphisms mapping leaves of $F$ onto leaves.
We will endow $\HF$ with the corresponding compact open topology.

Recall that all leaves of $F$ are oriented.
Then we denote by $\Hplus$ the subgroup of $\HF$ consisting of homeomorphisms $h: \Sigma \to \Sigma$ such that for each leaf $\omega$ the restriction map $h:\omega\to h(\omega)$ is orientation preserving.

Let $H_0^{+}(F)$ be the identity path component of $\Hplus$.
It consists of all $h\in\Hplus$ isotopic to $\id_{\Sigma}$ in $\Hplus$.
Then $H_0^{+}(F)$ is a normal subgroup of $\Hplus$, and the corres\-ponding quotient
$$\pi _0 H^{+}(F)=H^{+}(F) / H_0^{+}(F)$$
will be called the \emph{homeotopy} group of $F$.

\medskip

{\bf Class $\mathfrak{F}$.}
Denote by $\mathfrak{F}$ the class of striped surfaces
\[
\Sigma = \bigsqcup \limits_{\lambda \in \Lambda} S_{\lambda} \bigr/ \{X_{p(\gamma)} \stackrel{\varphi_{\gamma}}{\sim} Y_{q(\gamma)} \}
\]
satisfying the following conditions:
\begin{enumerate}
\item[1)] each $S_{\lambda}$, $\lambda \in \Lambda$, is
admissible,
$$
 \partial_{-} S_{\lambda} = J_1\times\{-1\} = \typeFinite{1} \times\{-1\},
 \quad
  \partial_{+} S_{\lambda} = \typeGen{\Iind_{\lambda}}\times\{1\},
$$
where $\Iind_{\lambda}$ coincides with one of the standard collections $\typeFinite{n}$, $\typeNatural$, $\typeNaturalInv$, or $\typeIntegers$;
\item[2)] the graph $\Gamma(\Sigma)$ is connected and has a finite diameter and no cycles.
\end{enumerate}

In particular, if $\Sigma \in \mathfrak{F}$, then each model strip $S_{\lambda}$ of $\Sigma$ regarded as a vertex of $\Gamma(\Sigma)$ has at most one incoming edge and at most countably many outcoming edges linearly ordered with respect to $\Iind_{\lambda}$.

Since $\Gamma(\Sigma)$ is connected and has a finite diameter and no cycles, it follows that there exists a unique vertex having no incoming edges.
We will call this vertex a \emph{root} and the corresponding strip a \emph{root} strip.

Thus every surface $\Sigma \in \mathfrak{F}$ of diameter $d$ can be represented as follows, see Figure~\ref{ris:image3}:
\begin{equation}\label{equ:striped_surface_F}
\Sigma \ = \ S \ \mathop{\cup}\limits_{\partial_{+}S} \ \Bigl(  \bigcup \limits_{\iind \in \Iind} \Sigma_{\iind} \Bigr),
\end{equation}
where
\begin{itemize} 
\item $S$ is a root strip of $\Sigma$,
$$
\partial_{-}S = J_1\times\{-1\}, \quad
\partial_{+}S = \mathop{\cup}_{\iind\in\Iind} J_{\iind} \times \{1\},
$$
where $\Iind \in \{[0], [1],\ldots, \ntN, \ntNInv, \ntZ\}$.

\item
$\Sigma_{\iind}$ is either empty or it is a striped surface belonging to $\mathfrak{F}$ and its graph $\Gamma(\Sigma_{\iind})$ has diameter less than $d$.

\item
Suppose $\Sigma_{\iind}$ is non-empty and let $S_{\iind}$ be the root strip of $\Sigma_{\iind}$.
Then $\partial_{-}S_{\iind}=J_1\times\{-1\}$ is glued to the boundary interval $J_{\iind} \times \{1\}$ of $\partial_{+}S$ by the homeomorphism
\[
\varphi:J_1\equiv(1,1.5) \longrightarrow  J_{\iind} \equiv (\iind, \iind+0.5),
\quad
\varphi(t) = t+\iind-1.
\]
\end{itemize}

\begin{figure}[h]
\includegraphics[height = 2.5 cm]{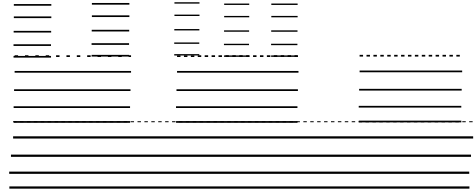}
\caption{A striped surface $\Sigma \in \mathfrak{F}$ whose graph $\Gamma(\Sigma)$ has diameter $3$}
\label{ris:image3}
\end{figure}

Obviously, $\Sigma \in \mathfrak{F}$ is a connected and simply connected non-compact surface.
Therefore it follows from \cite{Epstein:AM:1966} that the interior of $\Sigma$ is homeomorphic to $\rr^{2}$.

The class of homeotopy groups of foliations on striped surfaces which belongs to the class $\mathfrak{F}$ will
be denoted by
$\mathcal P$, i.e.
$$ \mathcal P = \lbrace\pi _0 H^{+}(F) \  | \ F \text{ is a canonical foliation of some  striped surface }
\Sigma \in \mathfrak{F} \rbrace.$$

We will also define  another class of groups
$\mathcal G$.

\begin{definition}\label{def:classZ}
{\rm{Let $\mathcal G$ be the minimal class of groups satisfying the following conditions:
\begin{enumerate}
\item[\rm 1)] $\lbrace 1 \rbrace \in \mathcal G$;
\item[\rm 2)]  if $A_i \in \mathcal G$ for $i\in\nn$, then $\prod\limits_{i \in \nn} A_i \in \mathcal G$;
\item[\rm 3)] if $ A \in \mathcal G$, then  $A \wr \zz\in \mathcal G$.
\end{enumerate} }}
\end{definition}

\begin{lemma}\label{lm:charact_classGroups}
A group $G$ belongs to $\mathcal G$ if and only if it can be obtained from the unit group $\lbrace 1\rbrace$ by a composition of finitely many operations of the following types:
\begin{itemize}
\item[\rm(a)]
countable direct products;
\item[\rm(b)]
wreath product with the group $\zz$.
\end{itemize}
\end{lemma}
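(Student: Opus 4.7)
The plan is to prove the equivalence by combining an elementary induction on the number of operations with the minimality of $\mathcal{G}$ as the smallest class satisfying conditions (1)--(3) of Definition~\ref{def:classZ}. At the outset I would fix the natural convention that a ``composition of finitely many operations'' means a finite sequence of applications of (a) or (b), with each application counting as a single operation regardless of how many inputs it receives, the inputs being groups produced earlier in the sequence.

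For the ``if'' direction, I induct on the number $n$ of operations in the composition producing $G$. The base case $n=0$ forces $G=\{1\}\in\mathcal{G}$ by condition~(1). For the inductive step the final operation is applied to groups that lie in $\mathcal{G}$ by the inductive hypothesis; applying condition~(2) in case (a) or condition~(3) in case (b) then places $G$ in $\mathcal{G}$.

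For the ``only if'' direction, let $\mathcal{G}'$ be the class of all groups obtainable from $\{1\}$ by such a finite composition. I would verify that $\mathcal{G}'$ itself satisfies the three defining conditions of Definition~\ref{def:classZ}: the trivial group lies in $\mathcal{G}'$ via the empty composition; if $A\in\mathcal{G}'$ then appending one application of~(b) shows $A\wr\zz\in\mathcal{G}'$; and for any family $\{A_i\}_{i\in\nn}$ in $\mathcal{G}'$, appending one application of~(a) to the already constructed $A_i$'s shows $\prod_{i\in\nn}A_i\in\mathcal{G}'$. The minimality of $\mathcal{G}$ then forces $\mathcal{G}\subseteq\mathcal{G}'$, and combined with the ``if'' direction this yields the equality $\mathcal{G}=\mathcal{G}'$. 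Once the convention on counting operations is fixed at the outset, no deeper obstacle arises; the whole argument is really the standard bookkeeping that identifies a ``minimal class closed under given operations'' with the class of ``finite compositions of those operations.''
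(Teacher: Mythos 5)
Your proof is correct and follows essentially the same route as the paper's: both directions come down to observing that the class of groups obtainable from $\lbrace 1\rbrace$ by finite compositions of (a) and (b) is contained in every class satisfying conditions 1)--3) of Definition~\ref{def:classZ} (your induction just makes the paper's first assertion explicit) and itself satisfies those conditions, so minimality gives the equality. The only caveat is that your explicitly stated convention (a single finite sequence of operation applications) sits a little awkwardly with your verification of condition 2), since the factors $A_i$ may be infinitely many pairwise distinct groups whose individual construction sequences cannot all be concatenated into one finite sequence --- but this imprecision in the meaning of ``finitely many operations'' is equally present in the paper's proof and is only disambiguated later through the notion of height.
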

\begin{proof}
Let $\mathcal G_0$ be the class of groups $G$ which can be obtained from the unit group $\lbrace 1\rbrace$ by a
composition of finitely many operations of types (a) and (b).
Then any class of groups satisfying conditions 1)--3) of Definition~\ref{def:classZ} contains $\mathcal G_0$,
whence $\mathcal G_0 \subset \mathcal G$.
On the other hand, $\mathcal G_0$ also satisfies conditions 1)--3) of Definition~\ref{def:classZ},
whence $\mathcal G \subset \mathcal G_0$ as well.
\end{proof}

Every representation $\xi(G)$ of $G$ as a composition of operations (a) and (b) will be called
\emph{a representation of $G$ in the class $\mathcal G$}.
Such a representation is not unique.
For example,
\begin{equation}\label{equ:repr_for_Z}
 \zz \ \cong \ \lbrace 1 \rbrace \wr \zz  \ \cong \ 1 \times (1 \wr \zz) \ \cong \ (1\times 1 \times 1) \wr \zz.
\end{equation}

\begin{definition}\label{df:height}
{\rm{The \emph{height} of a representation $\xi(G)$ of $G$ in the class $\mathcal G$ is a non-negative integer defined inductively as follows:
\begin{enumerate}
\item[\rm1)] $h(\lbrace 1 \rbrace)= 0$;
\item[\rm2)] $h(\xi(G) \wr \zz)= 1 + h(\xi(G))$;
\item[\rm3)] $h\left(  \prod\limits_{i \in  \Lambda} \xi(A_i) \right) = 1 + \max\limits_{i} \lbrace h (\xi(A_i)) \rbrace $.
\end{enumerate}}}
\end{definition}
\begin{example}
{\rm{Below are examples of representations of groups $\{1\}$, $\zz$ and $\zz\wr\zz$ in the class $\mathcal G$ and their heights:
\begin{align*}
&h(\{1\}) = 0, &
&h(\{1\}\times\{1\}) = 1, \\
&h(\{1\} \wr \zz) = 1, &
&h((\{1\}\times\{1\}) \wr \zz) = 2, \\
&h\bigl(\, (\{1\}\wr\zz)\,\times\,(\{1\}\wr\zz)\, \bigr) = 2, &
&h\bigl(\,((\{1\}\times\{1\})\wr\zz) \, \times \, (\{1\}\wr\zz)\,\bigr) = 3.
\end{align*}}}
\end{example}

\medskip

Let $\mathcal G' \subset \mathcal G$ be a subclass of $\mathcal G$ consisting of groups admitting a
representation of finite height in $\mathcal G$.
The aim of the present paper is to prove the following theorem:
\begin{theorem}\label{th:main_theorem}
Classes $\mathcal P$ and $\mathcal G'$ coincide.
\end{theorem}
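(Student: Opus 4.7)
The plan is to prove the two inclusions $\mathcal P\subseteq\mathcal G'$ and $\mathcal G'\subseteq\mathcal P$ separately, each by induction that exploits a recursive structure: for $\mathcal P\subseteq\mathcal G'$ the induction is on the diameter $d$ of $\Gamma(\Sigma)$ via the decomposition~\eqref{equ:striped_surface_F}, and for $\mathcal G'\subseteq\mathcal P$ the induction is on the height of a representation $\xi(G)$ in the sense of Definition~\ref{df:height}. Both inductions are glued together by one and the same structural description of $\pi_0H^+(F)$ in terms of the subsurfaces $\Sigma_\iind$ attached to the root strip.

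The technical heart, which I would state and prove first as a separate lemma, is the following: if $\Sigma\in\mathfrak F$ is decomposed as in~\eqref{equ:striped_surface_F}, with root strip $S$ indexed by $\Iind$ and non-empty branches $\Sigma_\iind\in\mathfrak F$, then there is a short exact sequence
\[
1\ \longrightarrow\ \prod_{\iind\in\Iind}\pi_0 H^{+}(F|_{\Sigma_\iind})\ \longrightarrow\ \pi_0 H^{+}(F)\ \longrightarrow\ \Pi_\Iind\ \longrightarrow\ 1,
\]
where $\Pi_\Iind$ is the group of order-preserving self-bijections of $\Iind$ which respect the foliated-homeomorphism types of the $\Sigma_\iind$. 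This rests on three ingredients: the root strip $S$ is canonically determined as the unique vertex of $\Gamma(\Sigma)$ without incoming edges, so every $h\in H^{+}(F)$ preserves $S$ setwise; the restriction $h|_S$ is a foliated orientation-preserving self-homeomorphism of an admissible strip, hence, up to isotopy, acts on $\partial_{+}S$ by an order-preserving permutation of the intervals $J_\iind\times\{1\}$; and given any compatible permutation together with a choice of foliated homeomorphism $\Sigma_\iind\to\Sigma_{\sigma(\iind)}$ on each branch, a cut-and-paste argument (using contractibility of the path components of $H(F)$ from~\cite{MaksymenkoPolulyakh:PGC:2015}) produces a single $h\in H^{+}(F)$ realizing the data, uniquely up to isotopy.

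Given this lemma, the inclusion $\mathcal P\subseteq\mathcal G'$ follows by induction on $d$. The base $d=0$ is a bare root strip, for which direct inspection of the admissible types shows $\pi_0H^{+}(F)=\{1\}$ unless $\Iind=\ntZ$, in which case it is $\ntZ\cong\{1\}\wr\ntZ\in\mathcal G'$. For $d\geq1$, each $A_\iind:=\pi_0H^{+}(F|_{\Sigma_\iind})$ lies in $\mathcal G'$ of bounded height by induction, and a case analysis on $\Iind$ disposes of the root: for $\Iind\in\{[n],\ntN,\ntNInv\}$ the group $\Pi_\Iind$ is trivial and $\pi_0H^{+}(F)\cong\prod_{\iind\in\Iind}A_\iind\in\mathcal G'$; for $\Iind=\ntZ$ either no nontrivial shift respects the types (giving again a product) or the minimal period is $k_0\in\ntN$, in which case grouping the factors by residue modulo $k_0$ identifies
\[
\pi_0 H^{+}(F)\ \cong\ \Bigl(\prod_{j=0}^{k_0-1}B_j\Bigr)\wr\ntZ,
\]
with $B_j:=A_j$. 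Each step raises the height by at most two, so finite diameter gives finite height.

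For $\mathcal G'\subseteq\mathcal P$ I induct on the height of $\xi(G)$. The base $h=0$, $G=\{1\}$, is realized by the single root strip with $\Iind=[0]$. In the product step $\xi(G)=\prod_i\xi(A_i)$, realize each $A_i$ by some $\Sigma_i\in\mathfrak F$ from the inductive hypothesis and attach them to a root strip with $\Iind=[n]$ or $\ntN$; the lemma with trivial $\Pi_\Iind$ yields $\pi_0H^{+}(F)\cong\prod_iA_i=G$. In the wreath step $\xi(G)=\xi(A)\wr\ntZ$, attach a $\ntZ$-indexed family of identical copies of the $\Sigma_0$ realizing $A$ to a root strip with $\Iind=\ntZ$; now $\Pi_\Iind=\ntZ$ acts by shifts, and the lemma gives $\pi_0H^{+}(F)\cong A\wr\ntZ=G$. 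The main obstacle is the structural lemma above, specifically the uniqueness-up-to-isotopy part: one must show that two foliated homeomorphisms inducing the same permutation of branches and branchwise isotopic restrictions can be joined by a global isotopy through $H^{+}(F)$, which requires an explicit extension of branchwise isotopies across the root strip compatible with the chosen shift in the $\Iind=\ntZ$ case.
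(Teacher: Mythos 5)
Your proposal is correct and follows essentially the same route as the paper: your structural lemma is in substance the paper's Theorem~\ref{structure_theorem}, where your quotient $\Pi_{\Iind}$ appears as the image $k\zz$ of the shift homomorphism $\eta\colon\Hplus\to\zz$ acting on the boundary intervals of the root strip, the kernel is identified with $\prod_{\iind\in\Iind}\pi_0\HFdSi{\iind}$ via explicit deformation retractions (Lemmas~\ref{lm:embed_ker} and~\ref{lm:hfix_ker}), and the identification of the resulting split extension with a wreath product --- the point you flag as the main obstacle --- is carried out there by the explicit isomorphism $\beta(h)=\bigl(\varphi_h,\widehat{\eta}(h)\bigr)$ with $\varphi_{h}(j)=\bigl[g^{-j-\widehat{\eta}(h)}\circ h\circ g^{j}|_{\Xsp}\bigr]$. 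Your two inductions, on $\diam\Gamma(\Sigma)$ for $\mathcal P\subset\mathcal G'$ and on the height of $\xi(G)$ for $\mathcal G'\subset\mathcal P$, coincide with those in the paper's final section.
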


In other words, a group $G$ is isomorphic with a homeotopy group $H^{+}(F)$ of some striped surface $\Sigma\in\mathfrak{F}$ with a canonical foliation $F$ if and only if $G$ can be obtained from the unit group $\lbrace 1\rbrace$ by a composition of finitely many operations of types (a) and (b) of Lemma~\ref{lm:charact_classGroups}.

\section{Preliminaries}
Let $\Sigma$ be a striped surface belonging to $\mathfrak{F}$ presented in the form~\eqref{equ:striped_surface_F}, and $S$ be the root strip of $\Sigma$.
We will use coordinates $(x,y)$ from the chart for $S$, so we can assume that $\partial_{+}S = \cup_{\iind\in\Iind} J_{\iind} \times \{1\}$.

Notice that if $h\in\Hplus$, then $h(S)=S$, whence there exists a unique number $\eta(h) \in \zz$ such that in the chart for $S$ we have that
\[
h(J_{\iind}\times \{1\}) = J_{\iind + \eta(h)}\times \{1\}
\]
for all $\iind \in \Iind$.
One can easily check that the correspondence $h\mapsto\eta(h)$ is a homomorphism
\begin{equation}\label{equ:eta}
\eta: \Hplus \rightarrow \zz.
\end{equation}
Obviously, $\eta$ can be a non-zero homomorphism only when $\Iind =\zz$.

Consider the following two subgroups of $\Hplus$:
\begin{align*}
Q_S &= \left\lbrace  h \in \Hplus \ | \ h(\omega)=\omega, \text{for each leaf $\omega$ of $F \subset S$ }  \right\rbrace, \\
\HifixS[S] &= \left\lbrace  h \in \Hplus \ | \ h|_{S} = \id|_{S}  \right\rbrace.
\end{align*}
It is evident that
\begin{equation}\label{equ:subgrouprs_of_ker_eta}
\HifixS[S] \ \subset \ Q_S \ \subset \ker(\eta).
\end{equation}
\begin{lemma}\label{lm:embed_ker}
Embeddings~\eqref{equ:subgrouprs_of_ker_eta} are homotopy equivalences.
\end{lemma}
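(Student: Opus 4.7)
The plan is to construct explicit strong deformation retractions $\ker(\eta)\searrow Q_S\searrow \HifixS[S]$; each is a homotopy equivalence, which yields the lemma. Fix coordinates $(x,y)\in\rr\times[-1,1]$ on $S$ coming from its chart. For $h\in\Hplus$, since $h(S)=S$ and $h$ preserves the horizontal foliation on $S$, we have $h|_S(x,y)=(\phi(x,y),\sigma(y))$ with $\sigma:[-1,1]\to[-1,1]$ an orientation-preserving homeomorphism and each $\phi_y:=\phi(\cdot,y)$ an orientation-preserving self-homeomorphism of the corresponding leaf. The condition $h\in\ker(\eta)$ forces $\sigma(\pm1)=\pm1$; the condition $h\in Q_S$ moreover forces $\sigma=\id$; and $h\in\HifixS[S]$ additionally forces $\phi_y=\id$ for every $y$.

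For the outer retraction $\ker(\eta)\searrow Q_S$, take the convex combination $\sigma_t(y):=(1-t)\sigma(y)+ty$ and set
$$r_t(h)(x,y) = \bigl(\phi(x,\sigma^{-1}(\sigma_t(y))),\ \sigma_t(y)\bigr)\ \text{on }S,\qquad r_t(h)=h\ \text{on }\Sigma\setminus S.$$
Since $\sigma_t(\pm1)=\pm1$ for all $t$, the two definitions coincide on $\partial S$, so $r_t(h)$ is a well-defined foliated self-homeomorphism of $\Sigma$. A direct check gives $r_0=\id$, $r_1(h)\in Q_S$ (the $y$-coordinate on $S$ is now preserved), and $r_t|_{Q_S}=\id$; continuity of $(h,t)\mapsto r_t(h)$ in the compact-open topology is immediate from the explicit formula.

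For the inner retraction $Q_S\searrow\HifixS[S]$ I proceed in two stages. The naive straight-line homotopy $(1-t)\phi_y(x)+tx$ on $S$ would deform $h|_S$ to $\id$, but it perturbs $h$ on $\partial_+S$ and so is inconsistent with the attached subsurfaces $\Sigma_\iind$. I therefore first push the boundary motion of $h$ into collars $C_\iind\subset S_\iind$ of $\partial_-S_\iind$ inside each $\Sigma_\iind$, using a cutoff $\rho:[-1,0]\to[0,1]$ equal to $1$ on $\partial_-S_\iind$ and $0$ above a fixed depth, and apply a convex-combination interpolation on $C_\iind$ weighted by $\rho$; simultaneously I modify $h$ on a thin collar of $\partial_+S$ inside $S$ by the matching formula so that the two sides glue continuously across $\partial_+S$. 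At $t=1$ this yields $h'\in Q_S$ with $h'|_{\partial S}=\id$, after which the straight-line homotopy on $S$ extended by the identity to $\Sigma\setminus S$ deforms $h'|_S$ to $\id$ without touching the rest of $\Sigma$.

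The main technical obstacle is verifying continuity of the two-stage deformation in the compact-open topology when $\Iind$ is infinite. This is handled by admissibility: the family $\{\overline{J}_\iind\}_{\iind\in\Iind}$ is locally finite in $\rr^2$, so every compact $K\subset\Sigma$ meets the charts of only finitely many collars $C_\iind$, on which the isotopy is given by a single explicit formula and coincides with $h$ elsewhere. Since $\HifixS[S]$ is visibly fixed pointwise by the deformation, this completes the plan.
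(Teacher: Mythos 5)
Your construction is correct and is essentially the paper's own proof: both deform $\ker(\eta)$ onto $Q_S$ by a convex-combination straightening of the transverse coordinate of $h|_S$, and then deform $Q_S$ onto $H^{+}(F,S)$ by a leafwise convex-combination straightening damped by a cutoff inside the attached root strips $S_i$ (where affineness of the gluing homeomorphisms is what makes your ``matching formula'' on the two sides of $\partial_{+}S$ agree), with admissibility/local finiteness giving continuity in the compact-open topology. The only inaccuracy is your closing claim that $H^{+}(F,S)$ is fixed pointwise by the second deformation: the collar interpolation inside $S_i$ does move an $h$ with $h|_S=\id_S$, but it preserves the condition $h|_S=\id_S$, i.e.\ it maps $H^{+}(F,S)$ into itself, which is all that is needed for a homotopy equivalence and is all the paper asserts.
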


\begin{proof}
First we will construct a deformation of $\ker(\eta)$ into $Q_S$.
Let $h\in\ker(\eta)$.
Since $h(S)=S$, it follows that $h$ interchanges leaves of $F$.
In the coordinates $(x,y)$ in the chart for $S$ these leaves are the lines $y=\mathrm{const}$, whence
\[
h(x, y) = \left( \alpha(x, y), \beta(y) \right),
\]
where $\alpha:S\to\rr$ and $\beta:[-1,1]\to[-1,1]$ are continuous functions such that for each $y\in(0,1)$ the correspondence   $x\mapsto \alpha(x, y)$ is a preserving orientation homeomorphism $\rr\to\rr$.

Then $h\in Q_S$ iff $\beta(y)=y$ for all $y\in[0,1]$. Define the
map $H:\ker(\eta)\times[0,1]\to\ker(\eta)$ by the formula
\[
H(h,t)(z) =
\begin{cases}
\left(  \alpha(x,y), (1-t)\beta(y)+ty \right), & z=(x,y) \in S, \\
z, & z \in  \Sigma \setminus S.
\end{cases}
\]
One can easily check that $H_0=\id_{\ker(\eta)}$, $H_t(Q_S) \subset Q_S$ for all $t\in[0,1]$, and $H(h,1) \in Q_S$.
Hence $H$ is a deformation of $\ker(\eta)$ into $Q_S$, and so the inclusion $Q_S\subset\ker(\eta)$ is a homotopy equivalence.

Similarly, let $h\in Q_S$, so
\[
h(x, y) = \left( \alpha(x, y), y \right)
\]
for all $(x,y)\in S$.
Notice that $h\in \HifixS[S]$ iff $\alpha(x,y)=x$ and $\beta(y)=y$ for all $(x,y)\in S$.

Let
\[
h(x,y) = \left( \alpha_{\iind}(x, y), \beta_{\iind}(y) \right)
\]
be the restriction of $h$ onto root strip $S_{\iind}$ of $\Sigma_{\iind}$ in the corresponding chart of $S_{\iind}$.
Since $\partial_{-}S_{\iind} = J_1\times\{-1\}$, we see that if $h\in\HifixS[S]$, then $\alpha_{\iind}(x,-1)=x$ for all $x\in J_1$ and $\iind\in\Iind$.

Fix a continuous function $\varepsilon:[-1,1]\to[0,1]$ such that
\[
\varepsilon(y) =
\begin{cases}
0, & y \in (-1,-0.8), \\
1, & y \in (0,1)
\end{cases}
\]
and define the following homotopy $G:Q_S \times [0,1] \to Q_S$ by
\[
G(h,t)(z) =
\begin{cases}
\left( (1-t) \alpha(x, y)+tx, y \right), & z=(x,y) \in S, \\
\bigl( (1-t \varepsilon(y) )\alpha_{\iind}(x, y) +t\varepsilon(y)x, \beta(y) \bigr), & z=(x,y) \in S_{\iind}, \\
z& z\not\in S \, \cup \,\bigl(\cup_{\iind\in\Iind} S_{\iind} \bigr).
\end{cases}
\]
Since $\partial_{-}S_{\iind}$ is glued to the boundary component $J_i\times \{1\}$ by an affine homeomorphism, and the formulas for $G$ are affine for each fixed $t$ and $y$, it follows that those formulas agree on $J_i\times \{1\}$ and $\partial_{-}S_{\iind}$, c.f.~\cite{MaksymenkoPolulyakh:PGC:2015}.
This implies that $G$ is a continuous map.

Moreover, one can easily check that $G_0=\id_{Q_S}$, $G_t(\HifixS[S])\subset \HifixS[S]$ for all $t\in[0,1]$, and $G_1(Q_S) \subset \HifixS[S]$.
Hence $G$ is a deformation of $Q_S$ into $\HifixS[S]$, and therefore the inclusion $\HifixS[S] \subset Q_S$ is a homotopy equivalence as well.
\end{proof}

Suppose $\Sigma_{\iind}$ is non-empty for some $\iind\in\Iind$.
Let $F_{\iind}$ be the canonical foliation on $\Sigma_{\iind}$ and $S_{\iind}$ be the root strip of $\Sigma_{\iind}$.
We will denote by $\HFdSi{\iind}$ the subgroup of $H^{+}(F_{\iind})$ consisting of homeomorphisms fixed on $\partial_{-}S_{\iind}$.

If $\Sigma_{\iind} = \varnothing$, then we will assume that $\HFdSi{\iind} = \{1\}$.

\begin{lemma}\label{lm:hfix_ker}
We have an isomorphism
\[ \pi_0 \ker(\eta) \cong \prod \limits_{\iind \in \Iind}   \pi_{0}\HFdSi{\iind}.\]
\end{lemma}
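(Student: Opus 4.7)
The plan is to reduce, via Lemma~\ref{lm:embed_ker}, to the subgroup $H^{+}(F,S)$ of homeomorphisms pointwise fixing the root strip, and then to realise $H^{+}(F,S)$ as a direct product of the subgroups $\HFdSi{\iind}$ acting on the subsurfaces $\Sigma_{\iind}$. By Lemma~\ref{lm:embed_ker}, the inclusion $H^{+}(F,S) \hookrightarrow \ker(\eta)$ is a homotopy equivalence, giving $\pi_{0}\ker(\eta) \cong \pi_{0} H^{+}(F,S)$, so it suffices to exhibit an isomorphism $\pi_{0} H^{+}(F,S) \cong \prod_{\iind}\pi_{0}\HFdSi{\iind}$.

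To construct it, I would use the restriction map
\[
\Phi: \pi_{0} H^{+}(F,S) \longrightarrow \prod\limits_{\iind\in\Iind} \pi_{0}\HFdSi{\iind}, \qquad \Phi([h]) = \bigl([h|_{\Sigma_{\iind}}]\bigr)_{\iind\in\Iind}.
\]
For $h \in H^{+}(F,S)$ the condition $h|_{S}=\id_{S}$ forces $h$ to preserve each connected component $\Sigma_{\iind}\setminus (J_{\iind}\times\{1\})$ of $\Sigma\setminus S$, and since $\partial_{-}S_{\iind}=J_{\iind}\times\{1\}\subset S$ is fixed pointwise, the restriction $h|_{\Sigma_{\iind}}$ lies in $\HFdSi{\iind}$. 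Thus $\Phi$ is a well-defined group homomorphism. Its inverse sends a tuple $([h_{\iind}])_{\iind}$ to the class of the homeomorphism $h:\Sigma\to\Sigma$ obtained by setting $h|_{S}=\id_{S}$ and $h|_{\Sigma_{\iind}}=h_{\iind}$ for chosen representatives; the gluing is consistent because each $h_{\iind}$ fixes $\partial_{-}S_{\iind}$. Independence of the choice of representatives, and simultaneously injectivity of $\Phi$, follows by gluing isotopies in each $\HFdSi{\iind}$ with the constant $\id_{S}$ into an isotopy in $H^{+}(F,S)$.

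The main obstacle, and the only nontrivial technical point, is the continuity of the glued homeomorphisms and isotopies when $\Iind$ is infinite. This follows from the fact that $\{S\}\cup\{\Sigma_{\iind}\}_{\iind\in\Iind}$ is a \emph{locally finite} closed cover of $\Sigma$: admissibility of the root strip guarantees that the intervals $J_{\iind}\times\{1\}$ form a locally finite family in $\rr^{2}$, so every point of $\Sigma$ has a neighborhood meeting only finitely many of the subsurfaces $\Sigma_{\iind}$. The pasting lemma for locally finite closed covers then yields that both the glued map on $\Sigma$ and its corresponding isotopy on $[0,1]\times\Sigma$ are continuous, completing the construction of $\Phi^{-1}$ and therefore the proof.
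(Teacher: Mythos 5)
Your proposal is correct and follows essentially the same route as the paper: reduce to $H^{+}(F,S)$ via Lemma~\ref{lm:embed_ker} and then identify $\pi_0 H^{+}(F,S)$ with $\prod_{\iind}\pi_0\HFdSi{\iind}$ via restriction to the subsurfaces $\Sigma_{\iind}$. The paper states this last identification as a canonical isomorphism $\HifixS[S]\cong\prod_{\iind}\HFdSi{\iind}$ and commutes $\pi_0$ with the product, whereas you verify the gluing and its continuity (via local finiteness) explicitly --- a harmless elaboration of the same argument.
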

\begin{proof}
Evidently, we have a canonical isomorphism
$$
\alpha: \HifixS[S] \cong \prod \limits_{\iind \in \Iind} \HFdSi{\iind},
\quad
\alpha(h) = ( h|_{\Sigma_{\iind}} )_{\iind\in\Iind}.
$$
Then from Lemma~\ref{lm:embed_ker} we get the following sequence of isomorphisms:
\[ \pi_0 \ker(\eta) \cong \pi_0 \HifixS[S] \cong
\pi_0  \prod \limits_{\iind \in \Iind} \HFdSi{\iind}   = \prod \limits_{\iind \in \Iind} \pi_{0}\HFdSi{\iind}. \]
Lemma is proved.
\end{proof}

\begin{theorem} \label{structure_theorem}
{\rm1)}
If $\eta$ is zero homomorphism, then the group $\pi_0 \Hplus$ is isomorphic to $\prod \limits_{\iind \in \Iind} \pi_0 \HFdSi{\iind}$.

{\rm2)}
Suppose the image of $\eta$ is $k \zz$ for some $k\geq1$, so $\Iind =\zz$.
Then the group $\pi_0\Hplus$ is isomorphic to $\left( \prod \limits_{\iind= 0}^{k-1} \pi_0  \HFdSi{\iind} \right) \wr \zz$.
\end{theorem}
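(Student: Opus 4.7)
The plan is to analyze the short exact sequence
$$1 \longrightarrow \ker(\eta) \longrightarrow \Hplus \xrightarrow{\eta} \Im(\eta) \longrightarrow 1$$
induced by the homomorphism $\eta$ from~\eqref{equ:eta}. Since $\Im(\eta) \subset \zz$ is discrete, the identity path component $H^{+}_{0}(F)$ of $\Hplus$ lies in $\ker(\eta)$, so passing to $\pi_0$ preserves exactness and yields
$$1 \longrightarrow \pi_0 \ker(\eta) \longrightarrow \pi_0 \Hplus \longrightarrow \Im(\eta) \longrightarrow 1.$$
By Lemma~\ref{lm:hfix_ker}, the kernel is $\prod_{\iind \in \Iind} \pi_0 \HFdSi{\iind}$.

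For part~1), the assumption $\eta = 0$ makes $\Hplus = \ker(\eta)$, so the conclusion is immediate from Lemma~\ref{lm:hfix_ker}.

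For part~2), first I would split the sequence. Pick any $\tau \in \Hplus$ with $\eta(\tau) = k$; then $n \mapsto \tau^n$ embeds $\zz \hookrightarrow \Hplus$ as a discrete subgroup mapping isomorphically onto $k\zz$ under $\eta$. Hence $\Hplus \cong \ker(\eta) \rtimes \langle\tau\rangle$, and since $\langle\tau\rangle$ is discrete, applying $\pi_0$ gives $\pi_0 \Hplus \cong \pi_0 \ker(\eta) \rtimes \zz$. Next I would identify the $\zz$-action. For $h \in \HifixS[S]$ with components $(h_\iind)_\iind$ under the isomorphism $\HifixS[S] \cong \prod_\iind \HFdSi{\iind}$, since $\tau(\Sigma_{\iind-k}) = \Sigma_\iind$, a direct computation gives
$$(\tau h \tau^{-1})|_{\Sigma_\iind} \ = \ \tau|_{\Sigma_{\iind-k}} \circ h_{\iind-k} \circ \tau^{-1}|_{\Sigma_\iind}.$$
Thus conjugation by $\tau$ shifts the index by $k$, modulo the identifications $\HFdSi{\iind-k} \cong \HFdSi{\iind}$ induced by $\tau$. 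Partitioning $\zz$ into blocks $\{nk, nk+1, \ldots, nk+k-1\}$ indexed by $n \in \zz$, and using $\tau^n$ to identify $\pi_0 \HFdSi{nk+j}$ with $\pi_0 \HFdSi{j}$ for $j \in \{0, \ldots, k-1\}$, we realise $\prod_{\iind \in \zz} \pi_0 \HFdSi{\iind}$ as $\mathrm{Map}(\zz, A)$ where $A := \prod_{j=0}^{k-1} \pi_0 \HFdSi{j}$. Under this identification, shift-by-$k$ becomes the standard shift-by-$1$ action of $\zz$, so the semidirect product is exactly the wreath product $A \wr \zz$, as claimed.

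The main obstacle I expect is the last identification: one must verify that the composite of the isomorphism $\HifixS[S] \cong \prod_\iind \HFdSi{\iind}$ with the $\tau^n$-induced identifications $\HFdSi{nk+j} \cong \HFdSi{j}$ is actually $\tau$-equivariant, i.e.\ intertwines conjugation by $\tau$ with the shift on $\mathrm{Map}(\zz, A)$. The computation is essentially bookkeeping, but one has to track the direction of the shift carefully and verify compatibility of the block identifications across different values of $n$.
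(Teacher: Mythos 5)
Your proposal is correct and is essentially the paper's own argument in different packaging: the paper also reduces part 1) to Lemma~\ref{lm:hfix_ker}, and for part 2) it fixes a homeomorphism $g$ with $\widehat{\eta}(g)=1$ (your $\tau$) and directly writes down the isomorphism $\beta(h)=\bigl(\varphi_h,\widehat{\eta}(h)\bigr)$ with $\varphi_h(j)=\bigl[g^{-j-\widehat{\eta}(h)}\circ h\circ g^{j}|_{\Xsp}\bigr]$, where $\Xsp=\bigcup_{\iind=0}^{k-1}\Sigma_{\iind}$ --- which is exactly the splitting of the extension over $k\zz$ combined with the $g^{j}$-induced block identifications that you describe. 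The ``bookkeeping'' you flag as the main obstacle is carried out in the paper as the explicit verification that $\beta$ is a homomorphism, injective and surjective, so your route closes without any new ideas.
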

\begin{proof}
1)~The assumption that $\eta$ is zero homomorphism means that $\Hplus = \ker(\eta)$, whence we get from Lemma~\ref{lm:hfix_ker} that
\[
\pi_0 \Hplus \cong \prod \limits_{\iind \in \Iind} \pi_0  \HFdSi{\iind}.
\]

2)~Suppose $\Im \eta = k \zz$.
Then we have an \emph{epimorphism} $\widehat{\eta}: \Hplus \rightarrow \zz$
defined by $\widehat{\eta}(h) = \eta(h) / k$ and such that
\[
h(\Sigma_r) = \Sigma_{r+k\cdot \widehat{\eta}(h)}, \quad r = 0,1,\ldots,k-1.
\]

Let
$$
\Xsp = \bigcup_{\iind=0}^{k-1} \Sigma_{\iind}, \quad
\partial_{-}\Xsp = \bigcup_{\iind=0}^{k-1} \partial_{-}S_{\iind},
$$
and $F_{\Xsp}$ be the oriented foliation on $\Xsp$ induced by $F$.
Denote by $\HFX$ the group of homeomorphisms of $\Xsp$ fixed on $\partial_{-}\Xsp$ and mapping leaves of $F_{\Xsp}$ onto leaves and preserving their orientation.
Then we have a natural isomorphism
\[
\prod \limits_{\iind=0}^{k-1} \HFdSi{\iind}  \ \cong \ \HFX
\]
which yields an isomorphism
\[
\prod \limits_{\iind=0}^{k-1} \pi_0\HFdSi{\iind}  \ \cong \ \pi_0\HFX.
\]

Therefore for the proof of Theorem~\ref{structure_theorem} we should construct an
isomorphism
\[
\beta: \pi_0\Hplus \ \longrightarrow \ \pi_0\HFX \ \wr \ \zz \ \equiv \  Map \Bigl( \zz,\pi_0\HFX \Bigr) \rtimes \zz.
\]

Fix any $g \in \Hplus$ with $\widehat{\eta}(g)=1$.
Then \[ g^{-\widehat{\eta}(h)} \circ h \left( \Sigma_{\iind} \right) = \Sigma_{\iind},\]
for all $h\in\Hplus$ and $\iind \in \zz$, whence $g^{-\widehat{\eta}(h)} \circ h \in \ker(\eta)$.
Thus we get a well-defined function
\[
\varphi_{h}: \zz \rightarrow \pi_0\HFX, 
\quad
\varphi_{h}(j)= \left[ g^{-j-\widehat{\eta}(h)} \circ h \circ g^{j} \bigl|_{\Xsp} \right].
\]
Define the following map:
\[
\beta: \pi_0\Hplus \longrightarrow \pi_0\HFX
\]
by the formula
\[
\beta(h) = \left( \varphi_{h}, \widehat{\eta}(h) \right), \quad h \in \pi_0  \Hplus.
\]
We claim that $\beta$ is an isomorphism.
First notice that the composition operation in $\HFX\wr\zz$ is given by the following rule:
\[
(\varphi_{h_1}, n)\cdot (\varphi_{h_2}, m)= (\varphi_{h_1}^{m} \cdot \varphi_{h_2}, n+m),
\]
where $\varphi_{h}^{m}(j)= \varphi_{h} (j+m)$.

{\bf Proof that $\beta $ is a homomorphism.}
Let $h_1, h_2 \in \Hplus$. Then
\begin{align*}
\beta(h_1) \circ \beta (h_2)
&= \bigl( \varphi_{h_1}, \ \widehat{\eta} (h_1) \bigr) \cdot \bigl(\varphi_{h_2}, \ \widehat{\eta} (h_2) \bigr) \\
&=\bigl(\varphi_{h_1}^{\widehat{\eta}(h_2)} \cdot \varphi_{h_2}, \ \widehat{\eta} (h_1) + \widehat{\eta} (h_2) \bigr) \\
&=\Bigl(\!\bigl[ g^{- j - \widehat{\eta} (h_1) - \widehat{\eta} (h_2)} \circ h_1 \circ g^{j+ \widehat{\eta} (h_2)} \circ g^{- j - \widehat{\eta} (h_2)} \circ h_2 \circ g^{j} |_{\Xsp}\bigr], \  \widehat{\eta} (h_1 \circ h_2)\!\Bigr) \\
&=\left(\!\bigl[ g^{- j - \widehat{\eta} (h_1 \circ h_2)} \circ h_1 \circ h_2 \circ g^{j} |_{\Xsp} \bigr], \ \widehat{\eta} (h_1 \circ h_2)\!\right) \\
&=\bigl( \varphi_{h_1 \circ h_2}, \ \widehat{\eta} (h_1 \circ h_2) \bigr) = \beta(h_1 \circ h_2).
\end{align*}

{\bf Proof that $\beta$ is injective.}
Let $h\in\Hplus$ be such that $[h] \in \ker \beta$.
We should prove that $h$ is isotopic in $\Hplus$ to $\id_{\Sigma}$.

The assumption $[h] \in \ker \beta$ means that $\beta (h) = (\varphi_h, \widehat{\eta}(h)) = (\varepsilon, 0)$, where $\varepsilon: \zz \rightarrow [\id_{\Xsp}]$ is the constant map into the unit of  $\pi_0\HFX$.
In particular, since $\eta(h)=0$, we get from Lemma~\ref{lm:embed_ker} that $h$ is isotopic in $\Hplus$ to a homeomorphism fixed on $S$.
Therefore we can assume that $h$ itself is fixed on $S$, that is $h\in \HifixS[S]$.
Then
\begin{equation}\label{equ:func_phih}
\varphi_{h}(j)= \left[g^{-j} \circ h \circ g^{j}|_{\Xsp} \right] = \varepsilon (j)=\left[ \id_{\Xsp} \right] \in \pi_0\HFX
\end{equation}
for each $j\in\zz$.
In other words, $g^{-j} \circ h \circ g^{j}|_{\Xsp}$ is isotopic to $\id_{\Xsp}$ relatively $\partial_{-}\Xsp$.

It suffices to prove that for each $\iind \in \zz$ the restriction $h|_{\Sigma_{\iind}}$ is isotopic in $H^{+}(F_{\iind}, \partial_{-}S_{\iind})$ to $\id_{\Sigma_i}$ relatively to $\partial_{-} S_{\iind}$.

Write $\iind=r+jk$ for a unique $r\in\{ 0,k-1\}$.
Then we have the following commutative diagram:

Therefore, we get from~\eqref{equ:func_phih} that $\left[ h |_{\Sigma_i} \right]=\left[ \id_{\Sigma_i} \right] \in H^{+}(F_{\iind}, \partial_{-}S_{\iind})$.
Hence $h$ is isotopic ot $\id_{\Sigma}$ in $\Hplus$.

{\bf Proof that $\beta$ is surjective.}
Let $(\varphi,n)\in\pi_0\HFX\wr\zz$.
For each $j\in\zz$ fix a homeomorphism $h_{j} \in \HFX$ such that $[h_j] = \phi(j) \in \pi_0\HFX$.
Now define the following homeomorphism $\widehat{h}$ of $\Sigma$ by the formula:
\[
\widehat{h} =
\begin{cases}
\id_{S}, & \text{on} \ S, \\
[g^{j}\circ h_j\circ g^{-j}] & \text{on} \ g^{j}(\Xsp)
\end{cases}
\]
and put $h = g^{n}\circ \widehat{h}$.
Then it is easy to see that $\beta([h]) = (\phi,n)$, whence $\beta$ is surjective.
Thus $\beta$ is an isomorphism.
\end{proof}

\section{Proof of Theorem~\ref{th:main_theorem}}
We should prove that $\mathcal P=\mathcal G'$.

\textbf{1.} First we will show that $\mathcal G'\subset
\mathcal P$.

Let $G \in \mathcal G'$, so $G$ has a representation $\xi(G)$ in
the class $\mathcal G$ of finite height $k=h(\xi(G))$. We have to
show that there exists a striped surface $\Sigma\in\mathfrak{F}$
with canonical foliation $F$ such that $G\cong\pi_0\Hplus$.

If $k = h(\xi(G)) = 0$, then $G$ is the unit group $\{1\}$ and $\xi (G) = \lbrace 1 \rbrace$.
Let $S$ be an admissible model strip with $\partial_{-}S=\typeFinite{1}\times\{-1\}$ and $\partial_{+}S = \varnothing$.
Then $S\in\mathfrak{F}$.
Let also $F$ be the canonical foliation on $S$.
Then
\[\pi _0 H^{+}(F) = \lbrace 1 \rbrace = G,\]
i.e. $G \in \mathcal P$.

Suppose that we have established our statement for all $k$ being less than some $\bar{k}>0$.
Let us prove it for $k=\bar{k}$.
It follows from Definition~\ref{df:height} that either
\begin{enumerate}
\item [(i)]  $\xi(G)= \prod \limits_{i \in \nn} A_i$ where each group $A_i$ has a representation $\xi(A_i)$ in the class $\mathcal G$ of height $h(\xi(A_i))<k$, or
\item [(ii)]   $\xi(G)=  A \wr \zz$, and $A$ has a representation $\xi(A)$ in the class $\mathcal G$ of height $h(\xi(A))< k$.
\end{enumerate}

In the case (i) due to the inductive assumption for each $i\in\nn$ there exists a striped surface $\Sigma_i\in\mathfrak{F}$ with foliations $F_i$ such that $A_i =\pi_0 H^{+}(F_i)$. 

Let $S$ be an admissible model strip with $\partial_{-}S = \typeFinite{1}\times\{-1\}$ and $\partial_{+}S  = \typeNatural \times \{1\}$, and $S_i$ be the root strip of $\Sigma_i$, $i\in\nn$.
Define the striped surface
\[
\Sigma \ = \ S \ \mathop{\cup}\limits_{\partial_{+} S }\  \Bigl(\mathop{\cup}\limits_{i\in\nn} \Sigma_i\Bigr)
\]
obtained by identifying $\partial_{-}S_i \subset \Sigma_i$ with $J_i\times\{1\} \subset \partial_{+} S$.
Then by Theorem~\ref{structure_theorem} $\eta$ is a trivial homomorphism, and
$\pi_0H^{+}(F) \cong \prod_{i\in\nn} \pi_0H^{+}(F_i) \cong \prod \limits_{i \in \nn} A_i \cong G$.
So $G \in \mathcal P$.

In the case (ii) again by inductive assumption there exists a striped surface $\widehat{\Sigma}\in\mathfrak{F}$ with a canonical foliation $\widehat{F}$ such that $A =\pi _0 H^{+}(\widehat{F})$. 

Take countably many copies $\widehat{\Sigma}_i$, $i\in\zz$, of $\widehat{\Sigma}$.
Let $\widehat{S}_i$ be the root strip of $\widehat{\Sigma}_i$ and $\widehat{F}_i$ be the canonical foliation on $\widehat{\Sigma}_i$.

Let also $S$ be an admissible model strip with $\partial_{-}S = \typeFinite{1}\times\{-1\}$ and $\partial_{+}S  = \typeIntegers \times \{1\}$.
Define the following striped surface:
\[
\Sigma \ = \  S \ \mathop{\cup}\limits_{\partial_{+} S} \  \Bigl(\, \mathop{\cup}\limits_{i\in\nn}
\widehat{\Sigma}_i \,\Bigr).
\]
Obtained by gluing each $\widehat{\Sigma}_i$ to $S$ by identifying $\partial_{-}\widehat{S}_i \subset \widehat{\Sigma}_i$ with $J_i\times\{1\} \subset \partial_{+} S$, $i\in\zz$.

Then for every pair $i,j\in\zz$ there exists $h\in H^{+}(F)$ such that $h(\widehat{\Sigma}_i) = \widehat{\Sigma}_j$, whence the homomorphism $\eta$, see~\eqref{equ:eta} is surjective.
Hence by Theorem~\ref{structure_theorem}
\[ \pi_0H^{+}(F) \cong \pi_0H^{+}(\widehat{F}) \, \wr \, \zz \cong A \wr \zz \cong G.\]
Thus, $G \in \mathcal P$ and so $\mathcal G'\subset \mathcal P$.

\medskip

\textbf{2.}
Conversely, let us show that $\mathcal P\subset \mathcal G'$.

Let $\Sigma\in\mathfrak{F}$ be a striped surface presented in the form~\eqref{equ:striped_surface} with canonical foliation $F$ and such $\diam\Gamma(\Sigma) = k$.
We should prove that $\pi_0\Hplus$ has a finite presentation in the class $\mathcal G$, which means that $\pi_0\Hplus\in \mathcal G'$. %

If $k=0$, then $\Sigma$ is an admissible model strip with
$$
\partial_{-}\Sigma = \typeFinite{1}\times\{-1\},\quad
\partial_{+}\Sigma = \typeGen{\alpha}, \quad  \alpha \in \{ [0], [1],\ldots,\ntN, \ntNInv, \ntZ\}.
$$
Then it easily follows from Theorem~\ref{structure_theorem} that $\pi_0\Hplus \cong \zz \cong \{1\}\wr\zz$ if $\alpha = \ntZ$, and $\pi_0\Hplus \cong \{1\}$ otherwise.
In both cases $\pi_0\Hplus \in \mathcal G$.

Suppose that we have established our statement for all $k$ being less than some $\bar{k}>0$.
We should prove it for $k=\bar{k}$.
Let
\[
\Sigma \ = \ S \ \mathop{\cup}\limits_{\partial_{+}S} \ \Bigl(  \bigcup \limits_{\iind \in \Iind} \Sigma_{\iind} \Bigr) \ \in \ \mathfrak{F}
\]
be such that $\Gamma(\Sigma)$ has diameter $k$.
Then $\Gamma(\Sigma_{\iind})$ has diameter less than $k$, and so by inductive assumption $\pi_0 \HFdSi{\iind}\in\mathcal G$.
Moreover, according to Theorem~\ref{structure_theorem} we have that
\begin{enumerate}
\item [(i)]
if $\mathrm{image}(\eta)=0$, then $\pi_0\Hplus\cong \prod \limits_{\iind \in \Iind} \pi_0 \HFdSi{\iind} \in \mathcal G$,

\item [(ii)] if $\mathrm{image}(\eta) =k\zz$, then
$\pi_0\Hplus\cong  \left( \prod \limits_{\iind= 0}^{k-1} \pi_0  \HFdSi{\iind} \right) \wr \zz \in \mathcal G$.
\end{enumerate}
Thus $\mathcal P\subset \mathcal G'$, and so $\mathcal P = \mathcal G'$.
Theorem~\ref{structure_theorem} completed.



\def\cprime{$'$} \def\cprime{$'$} \def\cprime{$'$} \def\cprime{$'$}
\providecommand{\bysame}{\leavevmode\hbox to3em{\hrulefill}\thinspace}
\providecommand{\MR}{\relax\ifhmode\unskip\space\fi MR }
\providecommand{\MRhref}[2]{%
  \href{http://www.ams.org/mathscinet-getitem?mr=#1}{#2}
}
\providecommand{\href}[2]{#2}


\end{document}